\documentclass[letterpaper,11pt]{article}
\usepackage{color}
\usepackage{pdflscape}

\usepackage{tabularx}
\usepackage{graphicx}
\usepackage{adjustbox}

\usepackage[small]{titlesec}
\usepackage{theorem,amsmath,amssymb,amscd}
\usepackage[all,cmtip]{xy}
\usepackage{booktabs}
\usepackage[hyperfootnotes=false, colorlinks, linkcolor={blue}, citecolor={magenta}, filecolor={blue}, urlcolor={blue}]{hyperref}
\usepackage[overload]{textcase} 

\setlength{\oddsidemargin}{0.2cm}
\setlength{\evensidemargin}{0.2cm}
\setlength{\textwidth}{16.0cm}
\setlength{\topmargin}{0.2cm}
\setlength{\textheight}{21.5cm}

\pagestyle{headings}
\theoremstyle{change}
\allowdisplaybreaks
\nonfrenchspacing

\newcommand{\Q}{{\mathbb Q}}
\newcommand{\Z}{{\mathbb Z}}

\newcommand{\C}{{\mathbb C}}

\newcommand{\p}{\mathfrak p}
\newcommand{\OF}{{\mathfrak o}}
\newcommand{\Fq}{{\mathbb F}_q}
\newcommand{\GL}{{\rm GL}}

\newcommand{\SL}{{\rm SL}}

\newcommand{\GSp}{{\rm GSp}}

\newcommand{\Sp}{{\rm Sp}}

\newcommand{\St}{{\rm St}}

\newcommand{\Si}[1]{{\rm Si}(\p^{#1})}

\newcommand{\cInd}{\text{\rm c-Ind}}

\newcommand{\qed}{\hspace*{\fill}\rule{1ex}{1ex}}
\newcommand{\forget}[1]{}
\def\qdots{\mathinner{\mkern1mu\raise0pt\vbox{\kern7pt\hbox{.}}\mkern2mu
\raise3.4pt\hbox{.}\mkern2mu\raise7pt\hbox{.}\mkern1mu}}

\newenvironment{proof}{\vspace{1ex}\noindent\emph{Proof.}\hspace{0.5em}}
	{\hfill\qed\vspace{2ex}}
\newenvironment{bsmallmatrix}{\left[\begin{smallmatrix}}{\end{smallmatrix}\right]}

\newtheorem{lemma}{Lemma.}[section]
\newtheorem{theorem}[lemma]{Theorem.}
\newtheorem{corollary}[lemma]{Corollary.}

\begin{document}

\thispagestyle{empty}

\begin{center}
 {\bf\Large Siegel $\mathfrak{p}^2$ Vectors for Representations of $\GSp(4)$}

 \vspace{3ex}
 Jonathan Cohen
 
 \vspace{3ex}
 \begin{minipage}{80ex}
  \small\textbf{Abstract.} Let $F$ be a $p$-adic field and $(\pi, V)$ an irreducible complex representation of $G=\GSp(4, F)$ with trivial central character. Let $\Si{2}\subset G$ denote the Siegel congruence subgroup of level $\p^2$ and $u\in N_G(\Si{2})$ the Atkin-Lehner element. We compute the dimension of the space of $\Si{2}$-fixed vectors in $V$ as well as the signatures of the involutions $\pi(u)$ acting on these spaces. 
 \end{minipage}
 \vspace{3ex}
\end{center}

\section{Introduction}

This paper is concerned with a dimension counting problem in $p$-adic representation theory for the group $\GSp(4)$. Part of the motivation comes from the following problem in the classical theory of Siegel modular forms. If $\Gamma\subset \Sp(4, \Q)$ is a congruence subgroup, then the dimension of the space of cusp forms of level $\Gamma$ (and integer weight $k$, say) is not known in general. For example, if $$\Gamma=\begin{bmatrix}
\Z & 4\Z & \Z & \Z \\
\Z & \Z & \Z & \Z \\
\Z & 4\Z & \Z & \Z \\
4\Z & 4\Z & 4\Z & \Z 
\end{bmatrix}\cap \Sp(4, \Z)$$ is the ``Klingen congruence subgroup of level $4$'' then the associated dimensions were only recently computed in \cite{RSY2022}. The method by which this was achieved required, as one of its several inputs, the  dimensions of spaces of fixed vectors in all irreducible smooth representations of $\GSp(4, \Q_2)$ for the subgroups $${\rm Kl}(4)=\begin{bmatrix}
\Z_2 & \Z_2 & \Z_2 & \Z_2 \\
4\Z_2 & \Z_2 & \Z_2 & \Z_2 \\
4\Z_2 & \Z_2 & \Z_2 & \Z_2 \\
4\Z_2 & 4\Z_2 & 4\Z_2 & \Z 
\end{bmatrix}\cap \GSp(4, \Z_2).$$ These dimensions, and more, had been computed in \cite{Yi2021}. We remark that the different ``shape'' of the subgroups in question is an artifact of different conventions for alternating forms in the classical and representation-theoretic contexts. 

If one hopes to use the approach taken in \cite{RSY2022} for other congruence subgroups, then a necessary component is the determination of dimensions of spaces of fixed vectors in all irreducible smooth representations $(\pi, V)$ of $\GSp(4, \Q_p)$ for appropriate local subgroups. In this paper we carry out this local computation for the ``Siegel congruence subgroups of level $p^2$'' given by $${\rm Si}(p^2)=\begin{bmatrix}
\Z_p & \Z_p & \Z_p &  \Z_p \\
\Z_p & \Z_p & \Z_p &  \Z_p \\
p^2\Z_p & p^2\Z_p & \Z_p &  \Z_p \\
p^2\Z_p & p^2\Z_p & \Z_p &  \Z_p \\
\end{bmatrix}\cap \GSp(4, \Z_p).$$ If we let $$u_2:=\begin{bmatrix}
 & & 1 & \\
  && & -1\\
  p^2  & & & \\
   & -p^2 & & 
\end{bmatrix}\in N_G({\rm Si}(p^2))$$ then $u_2$ acts by an involution on the space of ${\rm Si}(p^2)$-fixed vectors in any representation of $\GSp(4, \Q_p)$ with trivial central character. We compute the signature of this action. Our main result is stated in Theorem \ref{Main theorem}. We remark that the case $p=2$ may be found in \cite{RSY2022}, and the case where $(\pi, V)$ has Iwahori-invariant vectors 
is handled in \cite{Ro2006}. 

 Our situation is in one sense easier than the corresponding Klingen subgroup because ${\rm Si}(p^2)$ contains a conjugate of the pro-unipotent radical of $\GSp(4, \Z_p)$, and the work of \cite{Ros2018} then permits us to reduce to a finite computation employing the character tables of \cite{Sh1982} and \cite{En1972}. Since there are some errors in these character tables, we include a fully corrected table (when $p=2$) since there does not appear to be one published elsewhere; see tables \ref{tab:table8} and \ref{tab:table9}. When $q$ is odd the only errors we have found in \cite{Sh1982} are that the three entries in the $\tau_2$ column for the conjugacy classes of type $C$ should all be negated. 
 
 We thank Ralf Schmidt for many useful conversations in preparing this article. 

\section{Notation}\label{notation section}
Let $\Fq$ be the field with $q$ elements, where $q$ is a power of a prime number $p$. If $q$ is odd let $\xi $ denote a fixed non-square in $\Fq^\times$, and $\alpha_0:\Fq^\times \to \{\pm 1\} $ the nontrivial quadratic character.  

 Let $F$ be a field. Throughout we fix the symplectic form
\begin{equation}\label{Jdefeq}
 J=\begin{bsmallmatrix}&&&1\\&&1\\&-1\\-1\end{bsmallmatrix}.
\end{equation}
We let $\GSp(4)$ be the algebraic group whose $F$-points are
\begin{equation}\label{GSp4defeq}
 \GSp(4,F)=\{g\in\GL(4,F)\mid\ ^tgJg=\mu(g)J\text{ for some }\mu(g)\in F^\times\}.
\end{equation}
The kernel of the multiplier homomorphism $\mu:\GSp(4,F)\to F^\times$ is the symplectic group $\Sp(4,F)$. The center of $\GSp(4,F)$ is
\begin{equation}
 Z(F)=\{\begin{bsmallmatrix}a\\&a\\&&a\\&&&a\end{bsmallmatrix}\mid a\in F^\times\}
\end{equation}
If $F$ is a non-archimedean local field of characteristic zero, we let $\OF$ be its ring of integers, $\p$~the maximal ideal of~$\OF$, and $\varpi$ a generator of $\p$. We denote by~$q$ the cardinality of the residue class field~$\OF/\p$. We further define the maximal compact subgroup 
\begin{eqnarray}
K = \GSp(4, \OF),
\end{eqnarray} its prounipotent radical 
\begin{eqnarray}
K^+ = \ker(K\to \GSp(4, \OF/\p)),
\end{eqnarray}
and for $n\geq 1$, the Siegel congruence subgroups 
\begin{equation}\label{Si(n)}
 \Si{n}=K\cap\begin{bsmallmatrix}  \OF & \OF&\OF&\OF\\ \OF & \OF&\OF&\OF\\
 \p^n & \p^n&\OF&\OF\\
 \p^n & \p^n&\OF&\OF\end{bsmallmatrix}. 
\end{equation} We also define the Atkin-Lehner element $u_n\in N_G(\Si{n})$ by 
\begin{equation}
u_n = \begin{bsmallmatrix}
& & 1 & \\
 & & & -1 \\
 \varpi^n & &  & \\
 & -\varpi^n & & 
\end{bsmallmatrix}. 
\end{equation} Observe that $u_n^2 = \varpi^n I_4$. 

Let $(\pi, V)$ be a smooth complex representation of $G:=\GSp(4, F)$ where $F$ is a $p$-adic field. For a compact open subgroup $C\subset G$ we define \begin{equation}
V^C : = \{v\in V \mid \ \pi(c)v=v  \text{ for all } c\in C \}. 
\end{equation} For an irreducible representation $\rho$ of any group, we write $\omega_\rho$ for its central character.

\section{Main result and proof}

We now state and prove the main result of this article. 

\begin{theorem}\label{Main theorem}
Let $(\pi, V)$ be an irreducible smooth complex representation of $G$ with trivial central character.  The integers $\dim V^{\Si{2}}$ and the signatures of $\pi(u_2)$ are given in Table \ref{tab:table4} when $q$ is even and in tables \ref{tab:table6} and \ref{tab:table7} when $q$ is odd. 


\end{theorem}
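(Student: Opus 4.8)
The plan is to compute $\dim V^{\Si{2}}$ and the signature of $\pi(u_2)$ by exploiting the containment of a conjugate of $K^+$ inside $\Si{2}$, as indicated in the introduction. The key structural observation is that $\Si{2}$ contains a conjugate of the prounipotent radical $K^+$ of $K=\GSp(4,\OF)$. This is precisely the hypothesis under which the machinery of \cite{Ros2018} applies: taking $K^+$-invariants (or rather, invariants under the relevant conjugate) reduces the problem from the infinite-dimensional smooth representation $(\pi,V)$ to a finite-dimensional representation of the finite group $\GSp(4,\OF/\p)\cong\GSp(4,\Fq)$ (and, for the level $\p^2$ structure, of an appropriate quotient built from $\GSp(4,\OF/\p^2)$). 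Thus the first step is to make this reduction explicit: identify the finite group $\bar G$ and the finite subgroup $\bar S$ that is the image of $\Si{2}$, so that $\dim V^{\Si{2}}$ becomes $\dim(V^{K^+})^{\bar S}$, computable as an inner product of characters on $\bar G$.

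Once the reduction is in place, the second step is purely a finite-group character computation. For each irreducible smooth $(\pi,V)$ of $G$ with trivial central character, Roberts--Schmidt-type theory tells us how $V^{K^+}$ decomposes as a representation of $\bar G$; equivalently, which irreducible characters of $\GSp(4,\Fq)$ occur and with what multiplicity. Then $\dim V^{\Si{2}}$ is obtained by computing, for each such irreducible constituent $\rho$ of $\bar G$, the dimension $\dim \rho^{\bar S}$ of its $\bar S$-fixed subspace, via $\dim\rho^{\bar S}=\langle \mathrm{Res}_{\bar S}\rho,\triv\rangle = \frac{1}{|\bar S|}\sum_{s\in\bar S}\trace\rho(s)$. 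Here one plugs in the character values of $\GSp(4,\Fq)$, which is why the corrected character tables of \cite{Sh1982} and \cite{En1972} (our tables \ref{tab:table8} and \ref{tab:table9}) are needed; the correction of the $\tau_2$-column signs for type-$C$ classes when $q$ is odd, and the fully recomputed table when $q=2$, are essential inputs to get the numbers right.

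The third step is the signature computation for the involution $\pi(u_2)$. Since $u_2\in N_G(\Si{2})$ normalizes $\Si{2}$ and $u_2^2=\varpi^2 I_4$ acts trivially under the trivial-central-character assumption, $\pi(u_2)$ is a well-defined involution on $V^{\Si{2}}$, and its signature equals the trace of $\pi(u_2)$ on that space (the difference between the dimensions of the $+1$ and $-1$ eigenspaces). After conjugating into the finite quotient, $u_2$ descends to an element $\bar u$ normalizing $\bar S$, and the signature becomes $\frac{1}{|\bar S|}\sum_{s\in\bar S}\trace\rho(\bar u\, s)$ summed against the constituents $\rho$ of $V^{K^+}$. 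This is again a finite character-theoretic quantity, though it requires tracking how $\bar u$ interacts with the conjugacy classes of $\bar S$ inside $\bar G$ and with the action on the $K^+$-invariants.

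The main obstacle I anticipate is twofold and essentially bookkeeping-intensive rather than conceptual. First, one must correctly identify the finite image $\bar S$ of $\Si{2}$ and the coset data for $\bar u$, and verify that the $K^+$-reduction genuinely applies to $\Si{2}$ (rather than to some larger or conjugate subgroup); getting the group theory of the level-$\p^2$ structure exactly right is where subtle errors creep in. Second, and more seriously, the computation must be carried out uniformly across the entire classification of irreducible representations of $\GSp(4,F)$ with trivial central character — the long list of supercuspidals, Saito--Kurokawa and Howe--Piatetski-Shapiro type representations, various induced and Langlands-quotient families — each contributing its own decomposition of $V^{K^+}$ and hence its own row in the tables. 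The sheer number of cases, combined with the fragility of the character-table data (the documented errors in \cite{Sh1982} and \cite{En1972}), makes careful, case-by-case verification the crux of the argument; this is exactly why we present the final answer as the large tables \ref{tab:table4}, \ref{tab:table6}, and \ref{tab:table7} rather than a single closed formula.
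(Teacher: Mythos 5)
Your proposal follows essentially the same route as the paper: conjugate $\Si{2}$ (and $u_2$) so that the resulting group $H$ contains $K^+$, reduce to the finite group $M=H/K^+\cong\GL(2,\Fq)\times\Fq^\times$ inside $\GSp(4,\Fq)$, and compute $\dim V^{\Si{2}}=|M|^{-1}\sum_{m\in M}\chi(m)$ and $s(\pi)=|M|^{-1}\sum_{m\in M}\chi(mu)$ using R\"osner's parahoric restriction data and the corrected character tables, case by case over the Sally--Tadi\'c classification. The only imprecision is your parenthetical about a quotient built from $\GSp(4,\OF/\p^2)$: after the conjugation everything lives entirely in $\GSp(4,\Fq)$, and the remaining work is the conjugacy-class bookkeeping for $M$ and $Mu$ that you correctly anticipate.
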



\begin{proof}
Define the two groups $H$, $M$ and the element $u$ by 
\begin{eqnarray}
H&=& K\cap\begin{bsmallmatrix}  \OF & \OF&\p&\p\\ \OF & \OF&\p&\p\\
 \p & \p&\OF&\OF\\
 \p & \p&\OF&\OF\end{bsmallmatrix}  = \begin{bsmallmatrix}
 \varpi & & & \\
  & \varpi& & \\
   &   & 1 & \\
    & & & 1
 \end{bsmallmatrix}\Si{2}\begin{bsmallmatrix}
  \varpi^{-1} & & & \\
   & \varpi^{-1}& & \\
    &   & 1 & \\
     & & & 1
  \end{bsmallmatrix}\\
 M&=& H/K^+ = \left\{\begin{bsmallmatrix}
a & b & & \\
c & d & & \\
& & \lambda a & -\lambda b \\
 & & -\lambda c & \lambda d
  \end{bsmallmatrix} \mid \lambda, ad-bc \in \Fq^\times   \right\} \\
   u &=& \begin{bsmallmatrix} & & 1 & \\
   & & & -1 \\
    1 & & & \\
     & -1 & &  
   \end{bsmallmatrix}  = \varpi^{-1} I_4 \begin{bsmallmatrix}
    \varpi & & & \\
     & \varpi& & \\
      &   & 1 & \\
       & & & 1
    \end{bsmallmatrix}u_2  \begin{bsmallmatrix}
     \varpi^{-1} & & & \\
      & \varpi^{-1} & & \\
       &   & 1 & \\
        & & & 1
     \end{bsmallmatrix}\in N_{G}(H) 
\end{eqnarray}  Observe that 
$M\cong \GL(2, \Fq)\times \Fq^\times$. If $q$ is even then $\GSp(4, \Fq) = Z(\Fq)\times \Sp(4, \Fq)$ and $M = Z(\Fq)\times (M\cap \Sp(4, \Fq))$. Note $u\in \Sp(4, F)\cap K$. Let $s(\pi)$ denote the signature of $\pi(u_2)$ acting on $V^{\Si{2}}$, or equivalently, of $\pi(u)$ acting on $V^H$. Let $\chi$ denote the trace character of $V^{K^+}$ as a $K/K^+=\GSp(4, \Fq)$-representation. We have  \begin{eqnarray}
\dim V^{\Si{2}} &= &\dim V^H \nonumber\\
&= &\dim ((V^{K^+})^M) \nonumber \\
  &=& |M|^{-1}\sum\limits_{m\in M} \chi(m) \\
  s( \pi ) &=&  |M|^{-1}\sum\limits_{m\in M}\chi(mu)
\end{eqnarray}  where we identify $u$ with its image in $K/K^+$.

For each $(\pi, V)$, the corresponding $\chi$ is given in table 3 of \cite{Ros2018} in terms of the virtual characters listed in \cite{Sh1982} and \cite{En1972}. In tables \ref{tab:table8} and \ref{tab:table9} are corrected versions of the latter character tables.

{\bf Proof of dimension formulas.} We must determine the distribution of conjugacy classes of $ \GSp(4, \Fq)$ in $M$.

  \tabcolsep=0.11cm
 \begin{table}[h!]
   \begin{center}
     \caption{The representatives of the conjugacy classes of $M$, with the sizes of the $M$-conjugacy class and the labels of the corresponding classes in $\GSp(4, \Fq)$, $q$ odd, as listed in \cite{Sh1982}. Some rows have representative given in a quadratic extension of $\Fq$. In the last column we also indicate the equivalences that occur over $M$, for purposes of counting multiplicities.}
     \label{tab:table1}
     \begin{tabular}{c|c|c|c} 
    $g$ & 
        $[M:C_M(g)]$ & conditions &  $K/K^+$-conjugacy class  \\
       \hline
       
       $\begin{bsmallmatrix}
          a & & & \\
           & a & & \\
           & & a & \\
            & & & a 
          \end{bsmallmatrix}$  & $1$      &  $a\in \Fq^\times$ &   $A_0(a)$    \\

      $\begin{bsmallmatrix}
               a & 1 & & \\
                & a & & \\
                & & a & 1 \\
                 & & & a 
               \end{bsmallmatrix}$ & $q^2-1$ & $a\in \Fq^\times$  &  $A_{21}(a)$\\
               
          $\begin{bsmallmatrix}
                       a &  & & \\
                        & -a & & \\
                        & & -a &  \\
                         & & & a 
                       \end{bsmallmatrix}$ & $q^2+q$ & $a\in \Fq^\times$ &  $B_0(a)=B_0(-a)$\\  
                       
       $\begin{bsmallmatrix}
                            a &  & & \\
                             & -a & & \\
                             & & -a &  \\
                              & & & a 
                            \end{bsmallmatrix}$ & $q^2-q$ & $a^{q}=-a$  &  $C_0(a) = C_0(-a)$\\

     $\begin{bsmallmatrix}
     a & & & \\
      & a & & \\
      & & b & \\
       & & & b 
     \end{bsmallmatrix}$ & $1$ & $a,b\in \Fq^\times$, $a\neq b$  & $D_0(a,b)$ \\
     
      $\begin{bsmallmatrix}
        a & & & \\
         & b & & \\
         & & a & \\
          & & & b 
        \end{bsmallmatrix}$ & $q^2+q$ & $a,b\in \Fq^\times$, $a\neq b$ & $D_0(a,b)=D_0(b,a)$ \\

     $\begin{bsmallmatrix}
          a & 1 & & \\
           & a & & \\
           & & b & 1 \\
            & & & b 
          \end{bsmallmatrix}$ & $q^2-1$ & $a,b\in \Fq^\times$, $a\neq b$ &  $D_1(a,b)$\\

            $\begin{bsmallmatrix}
                                          ac &  & & \\
                                           & c & & \\
                                           & & c &  \\
                                            & & & ca^{-1}
                                          \end{bsmallmatrix}$ & $q^2+q$ & $a,c\in \Fq^\times$, $a\neq \pm 1$  &  $E_0(c,a)$\\

    $\begin{bsmallmatrix}
          a & & & \\
           & a^q & & \\
           & & a & \\
            & & & a^q 
          \end{bsmallmatrix}$ & $q^2-q$ & $a^q\neq a$ & $F_0(a) = F_0(a^q)$ \\

           $\begin{bsmallmatrix}
                                                    a_1 &  & & \\
                                                     & a_2 & & \\
                                                     & & a_3 &  \\
                                                      & & & a_4
                                                    \end{bsmallmatrix}$ & $q^2+q$ & $\substack{  a_i\in \Fq^\times,\  a_1a_4=a_2a_3, \\ a_i\neq a_j \text{ if } i\neq j} $ &  $H_0(a_1,a_2, a_3, a_4) =H_0(a_2, a_1, a_4, a_3)$\\  
                                                    
              $\begin{bsmallmatrix}
                       a & & & \\
                        & a^q & & \\
                        & & ba^{-q} & \\
                         & & & ba^{-1}
                       \end{bsmallmatrix}$ & $q^2-q$ & $\substack{  a^q\neq a, \ b\in \Fq^\times\\ b\neq a^2, a^{q+1}}$ & $I_0(a,b) = I_0(a^q, b)$ \\

     \end{tabular}
   \end{center}
 \end{table}
  \clearpage

  \tabcolsep=0.11cm
  \begin{table}[h!]
    \begin{center}
      \caption{The first eight rows give the conjugacy classes of $M\cap \Sp(4, \Fq)$ with the sizes of the $M$-conjugacy class and the labels of the corresponding classes in $\Sp(4, \Fq)$, $q$ even, as listed in \cite{En1972}. Some elements have representatives given in a quadratic extension of $\Fq$. In particular $\theta$ denotes an element of order $q^2-1$ in $\mathbb{F}_{q^2}$, with $\eta=\theta^{q-1}$ and $\gamma = \theta^{q+1}$. The last four rows give representatives of the $\Sp(4, q)$-conjugacy classes of $Mu\cap \Sp(4, q)$.  In the last column we also indicate the equivalences that occur over $M$, for purposes of counting multiplicities.}
      \label{tab:table2}
      \begin{tabular}{c|c|c|c} 
     $g$ & 
         $[M:C_M(g)]$ & conditions &  $K/K^+$-conjugacy class  \\
        \hline
        
        $\begin{bsmallmatrix}
           1 & & & \\
            & 1 & & \\
            & & 1 & \\
             & & & 1 
           \end{bsmallmatrix}$  & $1$      & &   $A_1$    \\

       $\begin{bsmallmatrix}
                1 & 1 & & \\
                 & 1 & & \\
                 & & 1 & 1 \\
                  & & & 1 
                \end{bsmallmatrix} $ & $q^2-1$ &   &  $A_{31}$\\

           $\begin{bsmallmatrix}
                        \gamma^i &  & & \\
                         & \gamma^j & & \\
                         & & \gamma^{-j} &  \\
                          & & & \gamma^i 
                        \end{bsmallmatrix}$ & $q^2+q$ & $\gamma^i$, $\gamma^j$, $\gamma^{i\pm j}\neq 1$ &  $B_1(i,j)=B_1(j,i)$\\  
                        
        $\begin{bsmallmatrix}
                             \theta^i &  & & \\
                              & \theta^{qi} & & \\
                              & & \theta^{-qi} &  \\
                               & & & \theta^{-i} 
                             \end{bsmallmatrix}$ & $q^2-q$ & $\theta^{i(q\pm 1)}\neq 1$  &  $B_2(i)=B_2(qi)$\\

      $\begin{bsmallmatrix}
      1 & & & \\
       & \gamma^i & & \\
       & & \gamma^{-i} & \\
        & & & 1 
      \end{bsmallmatrix}$ & $q^2+q$ & $1\leq i\leq q-2$  & $C_1(i)$ \\
      
      $\begin{bsmallmatrix}
           \gamma^i & & & \\
            & \gamma^{-i} & & \\
            & & \gamma^{i} & \\
             & & & \gamma^{-i}
           \end{bsmallmatrix}$ & $q^2+q$ & $1\leq i\leq q-2$   & $C_2(i)=C_2(-i)$ \\

     $\begin{bsmallmatrix}
           \eta^i & & & \\
            & \eta^{-i} & & \\
            & & \eta^{i} & \\
             & & & \eta^{-i}
           \end{bsmallmatrix}$ & $q^2-q$ & $1\leq i\leq q$  & $C_4(i)= C_4(-i)$ \\

        $\begin{bsmallmatrix}
                      \gamma^i & 1 & & \\
                       & \gamma^i & & \\
                       & & \gamma^{-i} & 1 \\
                        & & & \gamma^{-i}
                      \end{bsmallmatrix} $ & $q^2-1$ & $1\leq i\leq q-2$   &  $D_2(i) $\\
                        
   $ \begin{bsmallmatrix}
                                            1 &  & 1 & \\
                                              & 1 & & 1 \\
                                              & & 1 &  \\
                                               & & & 1
                                             \end{bsmallmatrix}$ & - &   &  $A_{31}$\\
    
      $\begin{bsmallmatrix}
                               1 & 1 & 1 & 1 \\
                                & 1 & &  1\\
                                & & 1 & 1 \\
                                 & & & 1 
                               \end{bsmallmatrix}$ & - &  &  $A_{32}$\\    
    
   $  \begin{bsmallmatrix}
                                                           \gamma^i &  & \gamma^i & \\
                                                            & \gamma^{-i} & & \gamma^{-i} \\
                                                            & & \gamma^{i} &  \\
                                                             & & & \gamma^{-i} 
                                                           \end{bsmallmatrix}$  & - & $1\leq i\leq q-2$  &  $D_2(i) = D_2(-i) $\\

     $  \begin{bsmallmatrix}
                                                               \eta^i &  & \eta^i & \\
                                                                & \eta^{-i} & & \eta^{-i} \\
                                                                & & \eta^{i} &  \\
                                                                 & & & \eta^{-i} 
                                                               \end{bsmallmatrix}$  & - & $1\leq i\leq q$  &  $D_4(i) = D_4(-i) $\\

      \end{tabular}
    \end{center}
  \end{table}
  \clearpage

 From tables \ref{tab:table1} and \ref{tab:table2} and the triviality of the central character of $\pi$ we can conclude the following formulas: when $q$ is even we have \begin{align}\label{q even finite dimension formula}
   (q^2-1)(q^2-q)\dim V^H &=  \chi(A_1)+ (q^2-1) \chi(A_{31}) \nonumber \\
   &+ \frac{q(q+1)}{2}  \sum\limits_{ \substack{i, j=1 \\ i\neq j, q-1-j} }^{q-2}\chi(B_1(i,j))  + \frac{q(q-1)}{2}\sum\limits_{\substack{i=1 \\ (q\pm 1)\nmid i  }}^{q^2-2}\chi(B_2(i)) \nonumber \\
   &+ q(q+1)\sum\limits_{i=1}^{q-2}\chi(C_1(i)) +\frac{q^2+q+2}{2}\sum\limits_{i=1}^{q-2}\chi(C_2(i)) \nonumber \\
   &+ \frac{q(q-1)}{2}\sum\limits_{i=1}^{q}\chi(C_4(i)) + (q^2-1)\sum\limits_{i=1}^{q-2}\chi(D_2(i)). 
   \end{align} and when $q$ is odd we have \begin{align} \label{q odd finite dimension formula}
  (q^2-1)(q^2-q)\dim V ^H & =  \chi(A_0)+ (q^2-1) \chi(A_{21}) +  \frac{q(q+1)}{2} \chi(B_0) + \frac{q(q-1)}{2}  \chi(C_0(\xi^{1/2})) \nonumber \\
  &+ \frac{q^2+q+2}{2}  \sum\limits_{1\neq b\in \Fq^\times} \chi(D_0(1, b)) + (q^2-1)  \sum\limits_{1\neq b\in \Fq^\times} \chi(D_1(1,b)) \nonumber \\
  &+ q(q+1)  \sum\limits_{\pm 1\neq a\in \Fq^\times} \chi(E_0(1, a)) + \frac{q(q-1)}{2} \sum\limits_{a\in \Fq}\chi(F_0(a+\xi^{1/2})) \nonumber \\
  &+ \frac{q(q+1)}{2} \sum\limits_{\substack{  a, b\in \Fq^\times\setminus \{1\} \\  b\neq  a, \ a^2   }}\chi(H_0(1, a, ba^{-1}, b ))  \nonumber \\
 &+ \frac{q(q-1)}{2}  \sum\limits_{\pm 1 \neq b\in \Fq^{\times}, }\chi(I_0(\xi^{1/2}, -b\xi))\nonumber  \\
 &+    \frac{q(q-1)}{2}  \sum\limits_{\substack{ a\in \Fq^\times \\ 1\neq b\in \Fq^\times} }\chi(I_0(a+\xi^{1/2}, b(a^2-\xi)) 
 \end{align} 
 In tables \ref{tab:table3} and \ref{tab:table5} we list the evaluations of the expressions in the right side of (\ref{q even finite dimension formula}) and (\ref{q odd finite dimension formula}) for all of the virtual characters constructed in \cite{En1972} and \cite{Sh1982}. 
 Coupled with table 3 of \cite{Ros2018}, one obtains the $\Si{2}$-columns of tables \ref{tab:table4}, \ref{tab:table6}, and \ref{tab:table7}.


{\bf Proof of signature formulas.} We must determine the distribution of conjugacy classes of $\GSp(4, \Fq)$ in $Mu=uM$. 

 Suppose first that $q$ is even. Elements in $uM$ have the form $\begin{bsmallmatrix}
 & \lambda X\\
 X & 
 \end{bsmallmatrix}$ for $X\in \GL(2, \Fq) $ and $\lambda\in \Fq^\times$. The equalities \begin{eqnarray}
\begin{bsmallmatrix}
   \lambda^{-1/2}I & \\
    &  I
   \end{bsmallmatrix} \begin{bsmallmatrix}
  & \lambda X\\
  X & 
  \end{bsmallmatrix}\begin{bsmallmatrix}
   \lambda^{1/2}I & \\
    &  I
   \end{bsmallmatrix} &=& \lambda^{1/2}\cdot \begin{bsmallmatrix}
     &  X\\
     X & 
     \end{bsmallmatrix} \nonumber \\
     \begin{bsmallmatrix}
       I &  \\
       I & I 
       \end{bsmallmatrix}\begin{bsmallmatrix}
        & X \\
       X &  
       \end{bsmallmatrix}\begin{bsmallmatrix}
       I &  \\
       I & I 
       \end{bsmallmatrix} &=& \begin{bsmallmatrix}
       X & X \\
        & X 
       \end{bsmallmatrix}\nonumber    
 \end{eqnarray} together show that \begin{equation*}
 \sum\limits_{m\in M} \chi(mu) = (q-1)\sum\limits_{X\in \GL(2, \Fq)} \chi( \begin{bsmallmatrix}
 X & X \\ & X
 \end{bsmallmatrix} ) = (q-1)^2\sum\limits_{X\in \SL(2, \Fq)} \chi( \begin{bsmallmatrix}
  X & X \\ & X
  \end{bsmallmatrix} ). 
 \end{equation*} Conjugating by an appropriate  $\begin{bsmallmatrix}
 B & \\
 & B 
 \end{bsmallmatrix}\in M$ allows us to assume that $X$ is in Jordan form. 
From the last four rows in table \ref{tab:table2}, and the orders of centralizers of elements of $\GL(2, \Fq)$, we obtain  \begin{align} \label{finite signature formula q even }
     q(q^2-1) s(\pi) &= \chi(A_{31})+(q^2-1)\chi(A_{32}) +\frac{q(q+1)}{2}\sum\limits_{i=1}^{q-2} \chi(D_2(i))+ \frac{q(q-1)}{2}\sum\limits_{i=1}^q\chi(D_4(i)).
     \end{align} 
   
    Finally, suppose $q$ is odd. Unlike the even $q$ case, not every element of $uM$ is conjugate to one which is block upper triangular. Define the following subgroups and elements of $M$: \begin{eqnarray}
 T = \left\{ \begin{bsmallmatrix}
 1 & & & \\
  & d & & \\
  & & c  & \\
   & & & c d
 \end{bsmallmatrix} \mid c,  d\in \Fq^\times   \right\},  \qquad Z = Z(\Fq), \qquad
 N=   \left\{ \begin{bsmallmatrix}
 1 & b & & \\
  & 1 & & \\
  & & 1 & -b \\
   & & & 1
 \end{bsmallmatrix} \mid b\in \Fq   \right\}, \nonumber 
 \end{eqnarray}
 \begin{eqnarray}
 t_{c,d} = \begin{bsmallmatrix}
  1 & & & \\
   & d & & \\
   & & c  & \\
    & & & c d
  \end{bsmallmatrix},\qquad
 x=  \begin{bsmallmatrix}
 1 & 1 & & \\
  & 1 & & \\
  & & 1 & -1 \\
   & & & 1
 \end{bsmallmatrix} , \qquad 
 w &=& \begin{bsmallmatrix}
 & 1 & & \\
 1 & & & \\
  & & & 1 \\
   & & 1 & 
 \end{bsmallmatrix}. \nonumber
  \end{eqnarray} Clearly $T$ and $u$ normalize $N$ and $\mu(w)=1$. 
  Using the Bruhat decomposition, every element in $M$ may be written uniquely in the form $ztn$ or in the form $ztnwn'$ for $z\in Z$, $t\in T$, $n,n'\in N$. Since $ztnwn'u = ztnwu(u^{-1}n'u)$ is conjugate to $zt(t^{-1}u^{-1}n'u t n)wu$, we obtain 
  \begin{align}
 \sum\limits_{m\in M} \chi(mu) &= (q-1)\sum\limits_{t\in T, n\in N} \left( \chi(tnu) + q\chi(tnwu)  \right) 
  \end{align} Note that $\mu(t_{c,d}nu)=\mu(t_{c,d}nwu)=cd$. The reader can readily verify the following assertions:  \begin{enumerate}
  \item We have $t_{a^2 c, d}nu$ is $T$-conjugate to $aI_4 \cdot t_{c,d}nu $ and $t_{a^2 c, d}nwu$ is $T$-conjugate to $aI_4 \cdot t_{c,d}nwu $, 
  
  \item If $1\neq n\in N$ has corresponding parameter $b\neq 0$ then $t_{c,d}nu$ is $T$-conjugate to $t_{c,d}xu$ and $t_{c, b^2 d}nwu$ is $T$-conjugate to $bI_4 \cdot t_{c,d}xwu$, 
  
  \item If $d\neq 1$ then $t_{c,d}nu$ is $N$-conjugate to $t_{c,d}u$. 
  \end{enumerate}
  
  Thus 
  \begin{align}
  \sum\limits_{m\in M} \chi(mu) &= \frac{(q-1)^2}{2}\sum\limits_{c\in \{ 1, \xi \}, d\in \Fq^\times, n\in N} \left( \chi(t_{c,d}nu) + q\chi(t_{c,d}nwu)  \right)\nonumber\\
  &= \frac{(q-1)^2}{2}\sum\limits_{c\in \{ 1, \xi \}, d\in \Fq^\times} \left( \chi(t_{c,d}u)+(q-1)\chi(t_{c,d}xu) + q\sum\limits_{n\in N}\chi(t_{c,d}nwu)  \right)\nonumber\\
  &= \frac{(q-1)^2}{2}\sum\limits_{c\in \{ 1, \xi \} }  \Bigg( \chi(t_{c,1}u) +(q-1)\chi(t_{c,1}xu)+q\sum\limits_{1\neq d\in \Fq^\times}  \chi(t_{c,d}u)  \nonumber\\
  & \hspace{.5 cm} + q\sum\limits_{d\in \Fq^\times }\left(\chi(t_{c,d}wu) + 
  (q-1)\chi(t_{c,d}xwu) \right)  \Bigg). 
  \end{align}  We now record the following facts, referring to \cite{Sh1982} for the notation for conjugacy classes: \begin{equation}
       t_{c,d}u \text{ is in a conjugacy class of type }
       \begin{cases}
        B_0(1) &\text{if } c=d=1\\
        C_0(\xi^{1/2}) &\text{if } c=\xi, d=1\\
        D_0(1,-1) &\text{if }  c=1, d=-1 \\
         F_0(\xi^{1/2}) &\text{if } c=\xi, d=-1\\
         H_0(1,-1, -d, d) & \text{if } c=1,  d\neq \pm 1\\
             I_0(\xi^{1/2},  d\xi) &\text{if } c=\xi, d\neq \pm 1\\
            \end{cases},
           \end{equation}

             \begin{equation}
                       t_{c,1}xu \text{ is in a conjugacy class of type }
                       \begin{cases}
                       B_{31}(1) &\text{if } c=1, -1\in \Fq^{\times 2}\\
                        B_{32}(1) &\text{if } c=1, -1\not\in \Fq^{\times 2}\\
                                    C_{11}(\xi^{1/2}) &\text{if } c=\xi, -1\not\in \Fq^{\times 2}\\
                                     C_{12}(\xi^{1/2}) &\text{if } c=\xi, -1\in \Fq^{\times 2}\\
                       \end{cases},
                      \end{equation}

               \begin{equation}
                                    t_{c,d}wu \text{ is in a conjugacy class of type }
                                    \begin{cases}
                                    D_0((-cd)^{1/2},-(-cd)^{1/2}) & \text{if } -cd\in \Fq^{\times 2}\\
                                                               F_0((-cd)^{1/2})
                                                                        & \text{if } -cd\not\in \Fq^{\times 2} \\
                                    \end{cases}.
                                   \end{equation}            For the last elements, observe that $t_{c, d}xwu$ has eigenvalues $(-c)^{1/2}\left(\frac{\pm 1 \pm (1+4d)^{1/2} }{2}\right)$. 
 In particular there are four distinct eigenvalues unless $d=-1/4$. It is not hard to show that 
 
  \begin{equation}
               t_{c,-1/4}xwu \text{ is in a conjugacy class of type }
                \begin{cases}
                 B_{31}(1/2) &\text{if }  -c, -1\in \Fq^{\times 2},  \\
                  B_{32}(1/2) &\text{if }  -c\in \Fq^{\times 2}, -1\not\in \Fq^{\times 2} \\
                   C_{11}(\xi^{1/2}/2)    &\text{if }  -c, -1\not\in \Fq^{\times 2}. \\
                     C_{12}(\xi^{1/2}/2)  &\text{if }  -c\not\in \Fq^{\times 2}, -1\in \Fq^{\times 2}. \\ 
             \end{cases}               \end{equation}
  Note that $\{-1, -\xi\}$ is a set of representatives for $\Fq^\times/\Fq^{\times 2}$. Finally we consider $t_{c,d}xwu$ with $d\neq -1/4$. For $r\in \mathbb{F}_{q^2}^\times$ we have $r^2\in \Fq$ if and only if $r^{q-1}\in \{ \pm1 \}$, with the sign determined by whether $r\in \Fq^\times$ or not. 
   Thus \begin{align}
   \sum\limits_{d\in \Fq^\times, d\neq -1/4} \chi(t_{c,d}xwu) &=\sum\limits_{d\in \Fq^\times, d\neq -1} \chi(t_{c,d/4}xwu) \\
    &=\frac{1}{2} \sum\limits_{  \substack{ r\in \mathbb{F}_{q^2}^{\times} \setminus\{\pm1  \}  \\ r^{q-1}=\pm 1   }     } \chi(t_{c, \frac{r^2-1}{4}} nwu).
   \end{align} The eigenvalues of $t_{c, \frac{r^2-1}{4}} nwu$ are $(-c)^{1/2}\left(\frac{\pm 1\pm r}{2}\right)$. It is not hard to show that the conjugacy class of $t_{c,\frac{r^2-1}{4}  }xwu$ is of type               
                                                          
                                                       \begin{equation}
                                                       \begin{cases}
                                                      
                                                 (-c)^{1/2}\cdot  H_0(\frac{1+r}{2}, \frac{-1-r}{2}, \frac{-1+r}{2} , \frac{1-r}{2}  )   &\text{if } -c\in \Fq^{\times 2}, r\in \Fq^\times\\  
                                                 (-c)^{1/2} L_0(\frac{1+r}{2}, \frac{-1+r}{2})   &\text{if }  -c\in \Fq^{\times 2}, r\not\in \Fq^\times \\
                     I_0((-c)^{1/2}\left( \frac{1+r}{2}  \right), \frac{c(r^2-1)}{4})   &\text{if }  -c\not\in \Fq^{\times 2},  r\in \Fq^\times \\     
                                                    I_0((-c)^{1/2}\left( \frac{1+r}{2}  \right), \frac{c(r^2-1)}{4})   &\text{if }  -c\not\in \Fq^{\times 2},  r\not\in \Fq^\times 
                                               \end{cases}.
                                                      \end{equation}                      
 Here the conjugacy class $I_0(a,b)$ denotes the one with an eigenvalue $a\not\in \Fq$ and similitude $b\neq a^2, a^{q+1}$, and $L_0(a,b)$ denotes a conjugacy class with eigenvalues $a,b\not\in \Fq$ and $a\neq b^q$. Putting everything together and using the triviality of the central character, we obtain

%
 
 
  \begin{align}\label{finite signature formula q odd }
  2q(q^2-1)s(\pi)&= \chi(B_0) +  \chi(C_0(\xi^{1/2})) \nonumber \\
  &+  (q^2-1) \left( \chi(B_{31}) \left(\frac{1+\alpha_0(-1)}{2} \right)+\chi(B_{32}) \left(\frac{1-\alpha_0(-1)}{2} \right) \right)\nonumber \\
  &+ (q^2-1)\left( \chi(C_{11}(\xi^{1/2}))\left( \frac{1-\alpha_0(-1)}{2}\right)+\chi(C_{12}(\xi^{1/2}))\left( \frac{1+\alpha_0(-1)}{2} \right)  \right)\nonumber \\
  &+ q^2\left(\chi(D_0(1,-1))+\chi(F_0(\xi^{1/2}))\right)  \nonumber \\
  &+  \frac{q(q+1)}{2} \sum\limits_{\pm 1 \neq a\in \Fq^\times} \left( \chi(H_0(1, -1, -a, a))+ \chi (I_0(\xi^{1/2}, \xi a)) \right)  \nonumber \\
  &+ \frac{q(q-1)}{2}\sum\limits_{a\in \Fq^\times }\left(  \chi(I_0(1+a\xi^{1/2}, -1+a^2\xi)) +\chi(L_0(1+a\xi^{1/2}, -1-a\xi^{1/2})) \right). 
  \end{align}  
  

In tables \ref{tab:table3} and \ref{tab:table5} we list the evaluations of the expressions in the right side of (\ref{finite signature formula q even }) and (\ref{finite signature formula q odd }) for all of the virtual characters given in the tables of \cite{En1972} and \cite{Sh1982}. 
Coupled with table 3 of \cite{Ros2018}, one obtains the $s$-columns of tables \ref{tab:table4}, \ref{tab:table6}, and \ref{tab:table7}. 
\end{proof}

\begin{corollary} Let $(\pi, V)$ be an irreducible representation of $G$ with trivial central character 
 Then $\dim V^{\Si{2}} \in \{0, 1, 2, 3, 4, 5, 6, 7, 8, 12  \}. $
\end{corollary}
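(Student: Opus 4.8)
The plan is to deduce the statement directly from Theorem \ref{Main theorem}, which records $\dim V^{\Si{2}}$ for every irreducible $\pi$ with trivial central character in tables \ref{tab:table4}, \ref{tab:table6} and \ref{tab:table7}. Since those tables are organized according to the standard classification of the irreducible smooth representations of $G$ into finitely many families (the non-supercuspidal types together with the supercuspidal representations), the corollary amounts to reading off the finitely many distinct values that occur and checking that they constitute exactly the set $\{0,1,2,3,4,5,6,7,8,12\}$.

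First I would recall the reduction established in the proof of Theorem \ref{Main theorem}, namely $\dim V^{\Si{2}} = \dim (V^{K^+})^M$, the multiplicity of the trivial representation of $M\cong\GL(2,\Fq)\times\Fq^\times$ in the finite-dimensional $K/K^+ = \GSp(4,\Fq)$-representation $V^{K^+}$. This alone shows that the dimension is a finite nonnegative integer, and, more usefully, that it vanishes whenever $V^{K^+}=0$; in particular every positive-depth supercuspidal, and more generally every representation with no depth-zero part, contributes the value $0$. Consequently only the representations whose $K^+$-fixed space is nonzero need to be examined in detail, and for each of these table 3 of \cite{Ros2018} supplies the virtual character $\chi$ that feeds into the dimension formulas (\ref{q even finite dimension formula}) and (\ref{q odd finite dimension formula}), whose evaluations are already tabulated in tables \ref{tab:table3} and \ref{tab:table5}.

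The bulk of the work is then bookkeeping: traverse the list of representation types, for each read the corresponding entry of tables \ref{tab:table4}, \ref{tab:table6}, \ref{tab:table7}, and collect the set of values. I expect $0$ to be ubiquitous, the small values $1$ through $8$ to be distributed across the principal series, the Klingen- and Siegel-induced families and their irreducible constituents, and the extremal value $12$ to be attained only by a representation whose depth-zero part $V^{K^+}$ is as large as possible, most plausibly a generic irreducible unramified principal series, for which $V^{K^+}$ is the full unramified principal series of $\GSp(4,\Fq)$ and the $M$-invariants are correspondingly large.

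The one point that genuinely requires verification, and which I regard as the main (if modest) obstacle, is the gap in the range: one must confirm that none of the tabulated dimensions equals $9$, $10$, or $11$, while $12$ does occur. This is not forced by any a priori bound. Note, for instance, that $\Si{2}$ neither contains nor is contained in an Iwahori subgroup, so one cannot simply invoke $\dim V^{I}=|W|=8$ to cap the dimension; this is precisely why a value exceeding $8$ can appear at all. Consequently the absence of the intermediate values $9,10,11$ can only be certified by exhausting the finite list produced by Theorem \ref{Main theorem}, which completes the proof.
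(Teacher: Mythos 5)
Your proposal is correct and follows exactly the route the paper intends: the corollary is an immediate consequence of Theorem \ref{Main theorem}, obtained by collecting the finitely many values appearing in the $\Si{2}$-columns of tables \ref{tab:table4}, \ref{tab:table6}, and \ref{tab:table7} (including the parameter-dependent entries such as $2+\chi_1(-1)$, which only contribute values already in the list). Your added observations about vanishing for representations with no depth-zero part and the need to check the absence of $9$, $10$, $11$ are accurate but do not change the substance of the argument.
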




\section{Tables}




The irreducible characters of $\GSp(4, q)$, with $q$ even, are classified in \cite{En1972}, and those with $q$ odd are handled in \cite{Sh1982}. We refer to their papers for the definition of the virtual characters that appear in the first column of tables \ref{tab:table3} and \ref{tab:table5}. 
In table \ref{tab:table3} we have listed the parameters for each character. In table \ref{tab:table5}, when $q$ is odd, the parameters are as follows: $\lambda, \mu,\nu: \Fq^\times \to \C^\times$, $\Lambda:\mathbb{F}_{q^2}^\times \to \C^\times$, $\omega : \mathbb{F}_{q^2}^\times[q+1]\to \C^\times $, $\Theta: \mathbb{F}_{q^2}^\times[(q-1)(q^2+1)]\to \C^\times$, and $\lambda': \mathbb{F}_{q^2}^\times[2(q-1)]\to \C^\times$. The character $\omega_0:\mathbb{F}_{q^2}^\times[q+1]\to\{\pm 1\} $ is the nontrivial quadratic character, and $N: \mathbb{F}_{q^2}^\times\to \Fq^\times$ denotes the norm map. The $\delta_{-,-}$ denotes the Kronecker delta. 

To allow for reducible representations of $\GSp(4, \Fq)$, we have not restricted the parameters defining these characters. For example, the unramified parabolically induced representations $V$ of $\GSp(4, F)$ will have reducible parahoric restriction $V^{K^+}$ as a $K$-representation. Some choices of parameters can result in non-genuine representations, hence the possibility of negative entries in the $M$ columns of tables \ref{tab:table3} and \ref{tab:table5}.

The irreducible nonsupercuspidal representations of $\GSp(4, F)$ were classified in \cite{Sa1993}. The families they construct constitute the representations given in families I, II,$\ldots,$ XI in tables \ref{tab:table4}, \ref{tab:table6} and \ref{tab:table7}. There are also three families of supercuspidals that appear in these tables in the last rows. The characters $\chi_i, \chi, \sigma:F^\times\to \C^\times$ and quadratic characters $\xi:F^\times \to \{\pm 1\}$ are all tamely ramified. We write $a(\sigma)\in \{ 0,1  \}$ for the conductor of a tamely ramified character $\sigma$. The letter $\rho$ indicates a depth zero supercuspidal representation of $\GL(2, F)$, to which one may associate a parameter $\Lambda$ as above with $\Lambda^{q-1}\neq 1$, and $\omega_\Lambda:=\Lambda^{q-1}$. 

In table \ref{tab:table8} and \ref{tab:table9}, $\zeta_n$ denotes a complex primitive $n$th root of unity.

 \tabcolsep=0.11cm
\begin{table}[h!]
  \begin{center}
    \caption{The ``dimensions'' of $M$-fixed vectors and signature $s$ of the involution $u$ for  $q$ even.}
    \label{tab:table3}
    \begin{tabular}{c|c|c|c} 
   $\sigma$ & 
      parameters & $M$  &     $s$ \\
      \hline
      
    $\theta_0$ &    & 1 &     1   \\
         
            $\theta_1$ &    & 3   &     1     \\
     
  $\theta_2$ &    & 3  &   1     \\
       
   $\theta_3$ &    & 0 &  0       \\
   
   $\theta_4$ &    &  2 &  0     \\
   
   $\theta_5$ &    & 0 &   0      \\
 
 $\chi_1(k,l)$ &  $k,l\in \Z/(q-1)\Z$  & $2+2(\delta_{k,0}+\delta_{l,0})^2+\delta_{k,l}+\delta_{k,-l}$  &    $2+\delta_{k,l}+\delta_{k,-l}$    \\

    $\chi_2(k)$ & $k\in \Z/(q^2-1)\Z$   & $\delta_{(q+1)k,0}-\delta_{(q-1)k,0}-2\delta_{k,0}$ &  $-\delta_{(q+1)k,0}-\delta_{(q-1)k,0}$      \\

     $\chi_3(k,l)$ &  $\substack{ k\in \Z/(q-1)\Z \\ l\in \Z/(q+1)/\Z}$     & $2+2\delta_{k,0}$  & $0$     \\

      $\chi_4(k,l)$ & $k, l\in \Z/(q+1)/\Z$    & $2-\delta_{k,l}-\delta_{k,-l}$  &  $-2+\delta_{k,l}+\delta_{k,-l}$     \\

  $\chi_5(k)$ & $k\in \Z/(q^2+1)/\Z$ &  0 &   0    \\

    $\chi_6(k)$ & $k\in \Z/(q-1)\Z$  &  $2+5\delta_{k,0}$  &       $2+\delta_{k,0}$  \\

   $\chi_7(k)$ & $k\in \Z/(q-1)\Z$  & $1+3\delta_{k,0}$ &   $1+\delta_{k,0}$     \\

   $\chi_8(k)$ & $k\in \Z/(q+1)\Z$  & $-\delta_{k,0}$   &   $-\delta_{k,0}$      \\

  $\chi_9(k)$ & $k\in \Z/(q+1)/\Z$  & $1+\delta_{k,0}$  &  $1-\delta_{k,0}$     \\

           $\chi_{10}(k)$ & $k\in \Z/(q-1)\Z$  &   $1+4\delta_{k,0}$ &    $1$   \\

          $\chi_{11}(k)$ & $k\in \Z/(q-1)\Z$  &  $3+5\delta_{k,0}$    &  $1+\delta_{k,0}$    \\

   $\chi_{12}(k)$ & $k\in \Z/(q+1)\Z$  &  $1-2\delta_{k,0}$ &  $-1$   \\

  $\chi_{13}(k)$ & $k\in \Z/(q+1)\Z$  & $3-\delta_{k,0}$  &   $-1+\delta_{k,0}$   \\

    \end{tabular}
  \end{center}
\end{table}


 \tabcolsep=0.11cm
\begin{table}[h!]
  \begin{center}
    \caption{$\Si{2}$-fixed vector dimensions and Atkin-Lehner signatures, $q$ even. }
    \label{tab:table4}
    \begin{tabular}{c|c|c|c|c} 
Type &   $\pi$  &
      data &  $\Si{2}$   &     $s$\\
      \hline
      
I &       $\chi_1\times\chi_2\rtimes \sigma$  & $a(\chi_1)=a(\chi_2)=0$ &   12 &      4  \\

&     &   $a(\chi_1)+a(\chi_2)=1$ &   4 &      $2$  \\
       
 &    &   $a(\chi_i)=a(\sigma)+a(\chi_i\sigma)=1$ & $3$ &      $3$  \\

 &     &   $a(\chi_i)=a(\sigma)=a(\chi_i\sigma)=1$ & $2$ &      $2$  \\

   IIa &$\chi \St_{\GL(2)}\rtimes \sigma$  & $a(\chi)=0$  & 5  &  1   \\  
   
 &    & $a(\chi)=1$ & 1  &   1  \\       
      
   IIb& $\chi 1_{\GL(2)}\rtimes \sigma$ & $a(\chi)=0$ &  7&  3    \\   
   
  &   &  $a(\chi)=1$ & 2  & 2   \\        
       
   IIIa &$\chi\rtimes \sigma \St_{\GL(2)}$ & $a(\chi)=0$ & 8 &  2    \\    
   
 &  & $a(\chi)=1$ & 3 &  1    \\       
            
   IIIb&  $\chi\rtimes \sigma 1_{\GL(2)}$  &  $a(\chi)=0$ &  4&  2    \\ 
   
 &   & $a(\chi)=1$ & 1  &  1   \\

    IVa& $\sigma \St_G$&$a(\sigma)=0$ &  2 &     0  \\

      IVb &$L(\nu^{2}, \nu^{-1}\sigma \St_{\GSp(2)})$  & $a(\sigma)=0$ & 6  &   2  \\

      IVc &$L(\nu^{3/2} \St_{\GL(2)}, \nu^{-3/2}\sigma)$ &  $a(\sigma)=0$ & 3  &  1   \\

      IVd &$\sigma 1_G$  & $a(\sigma)=0$   &   1 &   1   \\

         Va& $\delta( [\xi, \nu\xi], \nu^{-1/2}\sigma  )$ & $a(\sigma)=0$  & 2 &  0   \\        
               
            Vb &$L(\nu^{1/2}\xi\St_{\GL(2)}, \nu^{-1/2}\sigma)$ & $a(\sigma)=0$& 3  &    1\\

            Vc& $L(\nu^{1/2}\xi\St_{\GL(2)}, \nu^{-1/2}\xi\sigma)$  & $a(\sigma)=0$ & 3  &   1  \\

            Vd & $L(\nu\xi, \xi\rtimes \nu^{-1/2}\sigma)$ & $a(\sigma)=0$ & 4  &  2    \\

   VIa &$\tau(S,\nu^{-1/2}\sigma)$ & $a(\sigma)=0$ &  $5$  & 1   \\

 VIb &$\tau(T,\nu^{-1/2}\sigma)$&  $a(\sigma)=0$ & 3 &  1   \\

 VIc& $L(\nu^{1/2}\St_{\GL(2)},\nu^{-1/2}\sigma )$ &  $a(\sigma)=0$ &  0 & 0   \\

   VId& $L(\nu, 1_{F^\times}\rtimes \nu^{-1/2}\sigma)$  & $a(\sigma)=0$ & $4$ &    2  \\

   VII &$\chi\rtimes \rho$ &  $a(\chi)=0$ & 4  &    0 \\   
   
 &  &    $a(\chi)=1$ & 2  &  0   \\        
               
   VIIIa&  $\tau(S, \rho)$  &  & 3 &  $-1$    \\        
               
  VIIIb& $\tau(T, \rho)$  & & 1 &   1  \\        
                     
   IXa& $\delta(\nu\xi, \nu^{-1/2}\rho)$  & & 3 &  $-1$   \\  
   
    IXb&  $L(\nu\xi, \nu^{-1/2}\rho)$& & 1 &  1    \\

   X& $\rho\rtimes \sigma$ &  $a(\sigma)=0$ & $1$  &    $-1$  \\  
   
&   &   $a(\sigma)=1$ & $0$  & $0$    \\

   XIa& $\delta(\nu^{1/2}\rho, \nu^{-1/2}\sigma)$ & $a(\sigma)=0$ &  1 & $-1$    \\  
      
       XIb& $L(\nu^{1/2}\rho, \nu^{-1/2}\sigma)$ & $a(\sigma)=0$ &  0 &   0    \\  
      
     s.c. &  $\cInd_{ZK}^G \chi_5(k)$ &  & $0$ &  0   \\
     
       s.c.& $\cInd_{ZK}^G\chi_4(k,l)$ &  &$2$ &  $-2$ \\
     
       s.c. & $\cInd_{ZK}^G\theta_5$&   & 0 &  0  
      
    \end{tabular}
  \end{center}
\end{table}

\begin{landscape}

 \tabcolsep=0.11cm
\begin{table}[h!]
  \begin{center}
    \caption{The ``dimensions'' of $M$-fixed vectors and signature $s$ of the involution $u$, for $q$ odd. All representations are assumed to have trivial central character. 
    }
    \label{tab:table5}
    \begin{tabular}{c|c|c|c} 
   $\sigma$ & $\omega_\sigma$ &  $M$  &     $s$ \\
      \hline

 $X_1(\lambda,\mu, \nu)$ &  $\lambda\mu\nu^2$  & $1+\lambda(-1)+\delta_{\nu,1}+\delta_{\lambda\nu,1}+2(\delta_{\lambda,1}+\delta_{\mu,1})+4\delta_{\lambda,1}\delta_{\nu,1}$  &  $\nu(-1)(1+\delta_{\lambda\nu,1})+\lambda\nu(-1)(1+\delta_{\nu,1})$      \\

 $X_2(\Lambda, \nu)$ & $\nu^2\cdot \Lambda|_{\Fq^\times}$ &  $1-\Lambda(\xi^{1/2})\nu(\xi)+\delta_{\nu,1}-\delta_{\Lambda \nu\circ N, 1}-2\delta_{\nu,1}\delta_{\Lambda,1}$ &  $\nu(-1)\left(1 - \Lambda(\xi^{1/2})\nu(\xi)(1+\delta_{\nu,1})-\delta_{\Lambda \nu\circ N, 1}\right)$         \\

 $X_3(\Lambda, \nu)$  & $\nu\cdot \Lambda|_{\Fq^\times}$ &   $1+\nu(-1)+2\delta_{\nu,1}$  &  $0$       \\

    $X_4(\Theta)$ & $\Theta|_{\Fq^\times}$  & $1-\Theta(\xi^{1/2})$   & 0     \\  
    
      $X_5(\Lambda, \omega)$ &  $\Lambda|_{\Fq^\times}$  & $1+\omega(-1)-\delta_{\Lambda,1}-\delta_{\Lambda\widetilde{\omega},1}$  & $-\Lambda(\xi^{1/2})(1+\omega(-1))+\omega(-1)(\delta_{\Lambda,1}+\delta_{\Lambda \widetilde{\omega},1 })$    \\

      $\chi_1(\lambda, \nu)$ & $\lambda\nu^2$ & $1+3\delta_{\nu,1} $ & $\nu(-1)+\delta_{\nu,1}$     \\

      $\chi_2(\lambda, \nu)$ &  $\lambda\nu^2$ & $3+3\delta_{\nu,1}+2\delta_{\nu^2,1}$ &   $\nu(-1)+\delta_{\nu,1}$      \\

             $\chi_3(\lambda, \nu)$ &$\lambda^2\nu^2$ & $\delta_{\lambda\nu, 1}(1+\lambda(-1))+2\delta_{\lambda,1}+3\delta_{\lambda,1}\delta_{\nu,1}$  & $\delta_{\lambda\nu,1}(1+\lambda(-1))+\delta_{\lambda,1}\delta_{\nu,1}$        \\

    $\chi_4(\lambda, \nu)$ &$\lambda^2\nu^2$ & $1+\lambda(-1)\delta_{\lambda\nu,\alpha_0}+\delta_{\nu,1}+2\delta_{\lambda,1}+\delta_{\lambda,1}\delta_{\nu,1}$  &   $\nu(-1)+\alpha_0(-1) (1+\delta_{\nu,1})\delta_{\lambda\nu,\alpha_0}$      \\

      $\chi_5(\omega, \nu)$ &$\nu^2$ & $\delta_{\nu,1}(1-\omega(-1)-\delta_{\omega,1})$  &     $\delta_{\nu,1}(1-\omega(-1)-\delta_{\omega,1})$    \\

       $\chi_6(\omega, \nu)$ &$\nu^2$ & $1+\delta_{\nu, \alpha_0}(\omega(-1)-\delta_{\omega,\omega_0})-2\delta_{\nu,1}\delta_{\omega,1}$ &  $-\omega(-1)\nu(-\xi)+\nu(-1)\delta_{\nu,\alpha_0}(1-\delta_{\omega,\omega_0})$        \\   
       
          $\chi_7(\Lambda)$ & $\Lambda|_{\Fq^\times}$ &  $1+\delta_{\Lambda,1}$   &   $\Lambda(\xi^{1/2})-\delta_{\Lambda,1}$      \\

               $\chi_8(\Lambda)$ &$\Lambda|_{\Fq^\times}$ &  $3-\delta_{\Lambda,1}$  & $-\Lambda(\xi^{1/2})+\delta_{\Lambda,1}$         \\

      $\tau_1$ & 1 & $0$ & $0$        \\

         $\tau_2(\nu)$ & $\nu^2$ & $\delta_{ \nu,1}(1+\alpha_0(-1))$  &   $\delta_{ \nu,1}(1+\alpha_0(-1))$        \\

            $\tau_3$ & 1 & $1$  &   $\alpha_0(-1)$       \\

                $\tau_4(\lambda')$ & $\lambda'|_{\Fq^\times}$ &  $0$  & $0$        \\

                       $\tau_5(\lambda')$ & $\lambda'|_{\Fq^\times}$& $0$ & $0$        \\

         $\theta_1(\nu)$ &$\nu^2$ &  $1+2\delta_{\nu,1}$  & $\nu(-1)$        \\ 
         
           $\theta_2(\nu)$ & $\nu^2$ & $0$  &  $0$        \\       
      
        $\theta_3(\nu)$ &$\nu^2$ &  $1+2\delta_{ \nu,1}$  & $-\nu(-1)+2\delta_{\nu,1}$         \\ 
        
  $\theta_4(\nu)$ & $\nu^2$ &  $0$ &  $0$        \\ 
     
       $\theta_5(\nu)$ &$\nu^2$ &  $3-\delta_{ \nu,1}$  & $\nu(-1)-\delta_{\nu,1}$         \\  
       
         $\theta_0(\nu)$ & $\nu^2$ & $\delta_{\nu,1}$  & $\delta_{ \nu,1}$        \\ 
     
    \end{tabular}
  \end{center}
\end{table}

\end{landscape}

\begin{landscape}
 \tabcolsep=0.11cm
\begin{table}[h!]
  \begin{center}
    \caption{$\Si{2}$-fixed vector dimensions and Atkin-Lehner signatures, $q$ odd.}
    \label{tab:table6}
    \begin{tabular}{c|c|c|c|c|c} 
 Type &   $\pi$ &  $\omega_\pi$ &
      conditions & $\Si{2}$   &    $s$\\
      \hline
      
I &        $\chi_1\times\chi_2\rtimes \sigma$ & $\chi_1\chi_2\sigma^2$ & $a(\sigma)=a(\chi_i)=0$ & 12 &      4  \\
   
   &   &  & $a(\sigma)=1$, $a(\chi_i)=0$ & 6 &      $2\sigma(-1)$  \\
     
    &  &   & $a(\sigma) = a(\chi_1)+a(\chi_2)=1$ & 4 &      $2\sigma(-1)$  \\
       
   &   &   &$a(\chi_i)=a(\sigma)+a(\chi_i\sigma)=1$ & $2+\chi_1(-1)$ &      $1+2\chi_1(-1)$  \\

 &      &  &  $a(\chi_i)=a(\sigma)=a(\chi_i\sigma)=1$ & $1+\chi_1(-1)$ &      $(1+\chi_1(-1))\sigma(-1)$  \\

   IIa&  $\chi \St_{\GL(2)}\rtimes \sigma$  & $\chi^2\sigma^2$ & $a(\sigma)=a(\chi)=0$ & 5 &  1   \\      
   
 &   & & $a(\sigma)=0$, $a(\chi)=1$ & $2+\chi(-1)$  &   $1+2\chi(-1)$     \\ 
    
 &    &  & $a(\sigma)=1$, $a(\chi)=0$ & 4 &   $2\sigma(-1)$    \\

   &    &   & $a(\sigma)=a(\chi)=1$, $a(\sigma\chi)=0$ & 1 &    $\sigma(-1)$  \\   
           
    &          &   & $a(\sigma)=a(\chi)=a(\sigma\chi)=1$ & $1+\chi(-1)$ &   $(1+\chi(-1))\sigma(-1)$   \\        
      
   IIb&  $\chi 1_{\GL(2)}\rtimes \sigma$ &  $\chi^2\sigma^2$ & $a(\sigma)=a(\chi)=0$ & 7 &   3  \\  
   
& &   & $a(\sigma)=0$, $a(\chi)=1$ & 0 &  0     \\ 
       
 &      &   & $a(\sigma)=1$, $a(\chi)=0$ & 2 &  0      \\

  &       &   & $a(\sigma)=a(\chi)=1$, $a(\sigma\chi)=0$ & $1+\chi(-1)$ &      $1+\chi(-1)$ \\   
        
    &       &   & $a(\sigma)=a(\chi)=a(\sigma\chi)=1$ & 0 &      0\\       
       
   IIIa&  $\chi\rtimes \sigma \St$ & $\chi\sigma^2$ & $a(\sigma)=a(\chi)=0$ & $8$ &  2   \\     
   
  &   &  & $a(\sigma)=1$, $a(\chi)=0$ & $5$   &   $\sigma(-1)$   \\     
    
 &    &  & $a(\sigma)=a(\chi)=1$ & $3$  &   $\sigma(-1)$  \\           
            
   IIIb & $\chi\rtimes \sigma 1 $ & $\chi\sigma^2$&  $a(\sigma)=0$ & 4  &  2  \\

   &    & & $a(\sigma)=1$ & 1  &  $\sigma(-1)$    \\


    IVa&  $\sigma \St_G$& $\sigma^2$ & $a(\sigma)=0$ &  2 &     0  \\        
    
   &   &  & $a(\sigma)=1$ &  3 &  $\sigma(-1)$   \\        
         
      IVb & $L(\nu^{2}, \nu^{-1}\sigma \St_{\GSp(2)})$ & $\sigma^2$ & $a(\sigma)=0$ & 6  &    2  \\    
      
    &   & & $a(\sigma)=1$ & 2  &   0  \\        
          
      IVc & $L(\nu^{3/2} \St_{\GL(2)}, \nu^{-3/2}\sigma)$ & $\sigma^2$ & $a(\sigma)=0$ & 3  &  1   \\

     &  &  & $a(\sigma)=1$ & 1 &  $\sigma(-1)$  \\        
              
      IVd&  $\sigma 1_G$ & $\sigma^2$ & $a(\sigma)=0$   &   1 & 1   \\ 
      
     &  &  & $a(\sigma)=1$ & 0 &  0

    \end{tabular}
  \end{center}
\end{table}

\end{landscape}

 \tabcolsep=0.11cm
\begin{table}[h!]
  \begin{center}
    \caption{$\Si{2}$-fixed vector dimensions and Atkin-Lehner signatures, $q$ odd, continued. Here $t\in \mathbb{F}_{q^2}^\times$ denotes any element with $t^{q-1}=-1$. }
    \label{tab:table7}
    \begin{tabular}{c|c|c|c|c|c} 
 Type  &  $\pi$ &  $\omega_\pi$ &
      conditions & $\Si{2}$   &     $s$\\
      \hline

            Va & $\delta([\xi, \nu\xi], \nu^{-1/2}\sigma)$ & $\sigma^2$ &  $a(\xi)=a(\sigma)=0$ & 2 &     0 \\        
            
           &      & &  $a(\xi)=0$, $a(\sigma)=1$ & 3 &  $\sigma(-1)$    \\  
                
          &     & & $a(\xi)=1$  & 1 &   $\xi(-1)$  \\        
                         
                  
               Vb & $L(\nu^{1/2}\xi\St_{\GL(2)}, \nu^{-1/2}\sigma)$ & $\sigma^2$ & $a(\xi)=a(\sigma)=0$& 3  &   1 \\      
               
             &       &  & $a(\xi)=0$, $a(\sigma)=1$& 1  &   $\sigma(-1)$  \\

          &       &  &  $a(\xi)=1$, $a(\sigma)=0$& $1+\xi(-1)$ &  $1+\xi(-1)$   \\  
               
          &       &  &  $a(\xi)=a(\sigma)=1$& 0  &   0  \\             
                   
               Vc & $L(\nu^{1/2}\xi\St_{\GL(2)}, \nu^{-1/2}\xi\sigma)$  & $\sigma^2$ & $a(\xi)=a(\sigma)=0$ & 3  &   1  \\

              &     &  & $a(\xi)=0$, $a(\sigma)=1$ & 1   &   $\sigma(-1)$   \\

           &    & & $a(\xi)=1$ $a(\sigma)=0$ & 0 &     0 \\   
               
          &        &  & $a(\xi)=a(\sigma)=1$& $1+\sigma(-1)$&   $1+\sigma(-1)$  \\        
                        
               Vd&  $L(\nu\xi, \xi\rtimes \nu^{-1/2}\sigma)$ & $\sigma^2$ & $a(\xi)=a(\sigma)=0$& $4$  &  2    \\  
               
                  &   & & $a(\xi)=0$, $a(\sigma)=1$ & $1$  & $\sigma(-1)$    \\  
                    
                &      &  & $a(\xi)=1$ & 0  &   0  \\  
                                

      VIa & $\tau(S,\nu^{-1/2}\sigma)$ & $\sigma^2$ & $a(\sigma)=0$ &  $5$  &  1    \\    
       
      &  & &$a(\sigma)=1$ & $4$  &  $2\sigma(-1)$    \\        
                   
     VIb & $\tau(T,\nu^{-1/2}\sigma)$& $\sigma^2$ & $a(\sigma)=0$ & 3 &  1  \\        
     
    &  & & $a(\sigma)=1$ & 1 &   $-\sigma(-1)$  \\    
                    
     VIc&  $L(\nu^{1/2}\St_{\GL(2)},\nu^{-1/2}\sigma )$ & $\sigma^2$  &&  0 &  0   \\     
     
                         
       VId&  $L(\nu, 1_{F^\times}\rtimes \nu^{-1/2}\sigma)$ & & $a(\sigma)=0$ & $4$ &    2  \\  
       
      &  & & $a(\sigma)=1$ & $1$ & $\sigma(-1)$   \\

     VII&  $\chi\rtimes \rho$ & $\chi\omega_\rho$  & $a(\chi)=0$ & 4  &    0 \\   
       
    &    &   & $a(\chi)=1$ & $1+\chi(-1)$  & 0   \\        
                   
       VIIIa &  $\tau(S,\rho)$ & $\omega_\rho$ & & 3  &  $-\Lambda(t)$    \\        
                    
      VIIIb & $\tau(T, \rho)$& $\omega_\rho$ & & 1 &   $\Lambda(t)$  \\        
                         
       IXa & $\delta(\nu\xi, \nu^{-1/2}\rho)$ & $\xi \omega_\rho$ & $a(\xi)=0$ & 3   &  $-\Lambda(t)$   \\

         &   
         & & $a(\xi)=1$ & 0  &  0  \\

        IXb &   $L(\nu\xi, \nu^{-1/2}\rho)$ & $\xi \omega_\rho$ & $a(\xi)=0$ & 1 &  $-\Lambda(t)$    \\  
        
        &  
        & & $a(\xi)=1$ &  0 &  0   \\

   X&  $\rho\rtimes \sigma$ & $\omega_\rho\sigma^2$ & $a(\sigma)=0$ & $2-\Lambda(t)$  &    $1-2\Lambda(t)$  \\  
   
  &  &  & $a(\sigma)=1$ & $1-\Lambda\sigma^2(t)$  &  $\sigma(-1)(1-\Lambda\sigma^2(t))$    \\

   XIa &  $\delta(\nu^{1/2}\rho, \nu^{-1/2}\sigma)$  & $\sigma^2$ & $a(\sigma)=0$ & 1   &    $-\omega_\Lambda(-1)$  \\  
   
 &     & $\sigma^2$ & $a(\sigma)=1$ & $1+\omega_\Lambda(-1)$  &   $\sigma(-1)(1+\omega_\Lambda(-1))$    \\ 
      
       XIb&   $L(.\nu^{1/2}\rho, \nu^{-1/2}\sigma)$ & $\sigma^2$ &$a(\sigma)=0$  & $1-\omega_\Lambda(-1)$  &  $1-\omega_\Lambda(-1)$  \\

     &    & $\sigma^2$ &$a(\sigma)=1$  & 0 &    0 \\  
      
     s.c. &  $\cInd_{ZK}^GX_4(\Theta)$  & $\widetilde{\Theta}$ & &$1-\Theta(t)$ & 0   \\
     
       s.c. &  $\cInd_{ZK}^G X_5(\Lambda,\omega)$& $\widetilde{\Lambda}$ &  &$1+\omega(-1)$ &   $-\Lambda(t)(1+\omega(-1))$ \\
     
       s.c. &  $\cInd_{ZK}^G\theta_2$& $1$ &  & 0 &  0  
      
    \end{tabular}
  \end{center}
\end{table}

 \tabcolsep=0.11cm
\begin{table}[h]
  \begin{center}
    \caption{character table for $\Sp(4, q)$, $q$ even; $\alpha_i = \zeta_{q-1}^i + \zeta_{q-1}^{-i}$, $\beta_i = \zeta_{q+1}^i + \zeta_{q+1}^{-i}$,  $\theta_i = \zeta_{q^2-1}^i + \zeta_{q^2-1}^{-i}$}
    \label{tab:table8}
    \begin{adjustbox}{max width=\textwidth,max totalheight=\textheight,keepaspectratio}
    \begin{tabular}{|c|c|c|c|c|c|c|c|c|c|c|} 
   Class &  $\theta_0$ &
      $\theta_1 $ & $\theta_2$  &   $\theta_3$ & $\theta_4$ & $\theta_5$ & $\chi_1(k,l)$ & $\chi_2(k)$ & $\chi_3(k,l)$ & $\chi_4(k,l)$ \\
      \hline
      
$A_1$ & 1 & $q(q+1)^2/2$ & $q(q^2+1)/2$ & $q(q^2+1)/2$ & $q^4$ & $q(q-1)^2/2$  & $(q+1)^2(q^2+1)$ & $q^4-1$ & $q^4-1$ & $(q-1)^2(q^2+1)$  \\

$A_2$ & 1& $q(q+1)/2$ & $-q(q-1)/2$ &  $q(q+1)/2$ & & $-q(q-1)/2$ & $(q+1)^2$ & $q^2-1$ & $-q^2-1$ & $(q-1)^2$  \\

   $A_{31}$ & 1&$q(q+1)/2$ & $q(q+1)/2$ & $-q(q-1)/2$ & & $-q(q-1)/2$ & $(q+1)^2$ & $-q^2-1$ & $q^2-1$ & $(q-1)^2$\\         
            
 $A_{32}$ & 1&$q/2$ & $q/2$ &$q/2$ & & $q/2$ & $2q+1$ & $-1$ & $-1$ & $-2q+1$ \\
   
  $A_{41}$ & 1&$q/2$ & $-q/2$ & $-q/2$ & & $q/2$  & 1 & $-1$ & $-1 $ & 1  \\
   
   $A_{42}$ & 1&$-q/2$ & $q/2$ & $q/2$ & & $-q/2$& 1 & $-1$ & $-1 $ & 1  \\  
   
  & & & & & & & & & &  \\

$B_1(i,j)$ & 1& $2$ & 1& 1 & 1  & & $\alpha_{ik}\alpha_{jl}+\alpha_{il}\alpha_{jk}$ &  & &  \\

$B_2(i)$ & 1 & & 1& $-1 $ & $-1$ & & & $-\theta_{ik}-\theta_{qik}$ & &  \\

$B_3(i, j)$ & 1& & $-1$& $1$ & $-1$ & & & & $-\alpha_{ik}\beta_{jl}$  &  \\

$B_4(i,j)$ & 1& & $-1$& $-1$ & 1 & $-2$ & & & & $\beta_{ik}\beta_{jl} + \beta_{il}\beta_{jk}$\\

$B_5(i)$ & 1& $-1$ & & & $1$ & $1$  & & & & \\

& & & & & & & & & &  \\ 

$C_1(i)$ & 1& $q+1$ & $q$ & $1$ & $q$ & & $(q+1)(\alpha_{ik}+\alpha_{il})$ & & $(q-1)\alpha_{ik}$ &  \\

$C_2(i)$ & 1& $q+1$& $1$ & $q$ & $q$ &  & $(q+1)\alpha_{ik}\alpha_{il}$ & $(q-1)\alpha_{ik}$ & &  \\

$C_3(i)$ & 1& & $-1$ &  $q$ & $-q$ & $q-1$ & & & $-(q+1)\beta_{il}$ & $-(q-1)(\beta_{ik}+\beta_{il})$\\

$C_4(i)$ & 1& & $q$ &  $-1$ & $-q$ & $q-1$ & & $-(q+1)\beta_{ik}$ & & $-(q-1)\beta_{ik}\beta_{il}$\\

& & & & & & & & & &  \\ 

$D_1(i)$ & 1& 1& & 1 & & & $\alpha_{ik}+\alpha_{il}$ & & $-\alpha_{ik}$ &  \\

$D_2(i)$ & 1& 1& 1 &  & & & $\alpha_{ik}\alpha_{il}$ & $-\alpha_{ik}$ & &  \\

$D_3(i)$ & 1& & $-1$ & & & $-1$ & & & $-\beta_{il}$ & $\beta_{ik}  + \beta_{il}$\\

$D_4(i)$ & 1& & & $-1$ & & $-1$ & & $-\beta_{ik}$ & & $\beta_{ik}\beta_{il}$  \\
  
    \end{tabular}
    \end{adjustbox}
  \end{center}
\end{table}

 \tabcolsep=0.11cm
\begin{table}[h]
  \begin{center}
    \caption{character table for $\Sp(4, q)$, $q$ even, continued; $\tau_i = \zeta_{q^2+1}^i+\zeta_{q^2+1}^{-i}$}
    \label{tab:table9}
\begin{adjustbox}{max width=\textwidth,max totalheight=\textheight,keepaspectratio}
    \begin{tabular}{|c|c|c|c|c|c|c|c|c|c|c} 
   Class &  $\chi_5(k)$ &
      $\chi_6(k) $ & $\chi_7(k)$  &   $\chi_8(k)$ & $\chi_9(k)$ & $\chi_{10}(k)$ & $\chi_{11}(k)$ & $\chi_{12}(k)$ & $\chi_{13}(k)$ &  \\
      \hline
      
$A_1$ & $(q^2-1)^2$ & $(q+1)(q^2+1)$ & $(q+1)(q^2+1)$ & $(q-1)(q^2+1)$ & $(q-1)(q^2+1)$ & $q(q+1)(q^2+1)$  & $q(q+1)(q^2+1)$ & $q(q-1)(q^2+1)$ & $q(q-1)(q^2+1)$   \\

$A_2$ & $-q^2+1 $& $q+1$ & $q^2+q+1$ &  $q-1$ &  $-q^2+q-1$ & $q(q+1)$ & $q$ & $q(q-1)$ & $-q$   \\

   $A_{31}$ & $-q^2+1 $& $q^2+q+1$ & $q+1$ &  $-q^2+q-1$ &  $q-1$ & $q$ & $q(q+1)$ & $-q$ & $q(q-1)$   \\        
            
 $A_{32}$ & 1& $q+1$ & $q+1$ &$q-1$ & $q-1$ & $q$ & $q$ & $-q$ & $-q$  \\
   
  $A_{41}$ & 1&$1$ & $1$ & $-1$ & $-1$ &   &  &  &    \\
   
   $A_{42}$ & 1&$1$ & $1$ & $-1$ & $-1$ &   &  &  &    \\
   
  & & & & & & & & & &  \\

$B_1(i,j)$ &  & $\alpha_{ik}\alpha_{jk}$ & $\alpha_{ik}+\alpha_{jk}$&  &   & $\alpha_{ik}\alpha_{jk}$ & $\alpha_{ik}+\alpha_{jk}$ & &   \\

$B_2(i)$ &  & $\alpha_{ik}$ &  & $-\beta_{ik} $ &  & $-\alpha_{ik}$ & & $-\beta_{ik}$ &  \\

$B_3(i, j)$ & & & $\alpha_{ik}$&  & $-\beta_{jk}$ & & $-\alpha_{ik}$ & & $-\beta_{jk}$    \\

$B_4(i,j)$ & & & & $-\beta_{ik}\beta_{jk}$ & $-\beta_{ik}-\beta_{jk}$&  & & $\beta_{ik}\beta_{jk}$ & $\beta_{ik}+\beta_{jk}$ \\

$B_5(i)$ & $\tau_{ik}+\tau_{qik}$&   & & &    & & & & \\

& & & & & & & & & &  \\ 

$C_1(i)$ & & $(q+1)\alpha_{ik}$ & $q+1+\alpha_{ik}$ &  & $q-1$ &   $(q+1)\alpha_{ik}$ & $q+1+q\alpha_{ik}$  &  & $q-1$  \\

$C_2(i)$ & &   $q+1+\alpha_{2ik}$& $(q+1)\alpha_{ik}$ &   $q-1$ &  &   $q+1+q\alpha_{2ik}$& $(q+1)\alpha_{ik}$    & $q-1$ & \\

$C_3(i)$ & & & $q+1$ &  $(q-1)\beta_{ik}$ & $q-1-\beta_{ik}$ & & $-q-1$ &   $-(q-1)\beta_{ik}$ & $-q+1-q\beta_{ik}$\\

$C_4(i)$ & &  $q+1$ & &   $q-1-\beta_{2ik}$ &$(q-1)\beta_{ik}$ &  $-q-1$ & & $-q+1-q\beta_{2ik}$ &  $-(q-1)\beta_{ik}$ \\

& & & & & & & & & &  \\ 

$D_1(i)$ & & $\alpha_{ik}$ & $1+\alpha_{ik}$ &  & $-1$ & $\alpha_{ik}$ & $1$ & & $-1$   \\

$D_2(i)$ & &  $1+\alpha_{2ik}$ & $\alpha_{ik}$ & $-1$  & & $1$ & $\alpha_{ik}$ & $-1$ &   \\

$D_3(i)$ & & & $1$ & $-\beta_{ik}$ & $-1-\beta_{ik}$ &  & $-1$ & $\beta_{ik}$ & 1 \\

$D_4(i)$ & & $1$ & & $-1-\beta_{2ik}$ & $-\beta_{ik}$ & $-1$ & & $1$ &  $\beta_{ik}$  \\
  
    \end{tabular}
    \end{adjustbox}
  \end{center}
  
\end{table}

\clearpage


\begin{thebibliography}{widest entry}
\bibitem[Da2007]{dabbaghianabdoly2007} Dabbaghian-Abdoly, Vahid. ``Characters of some finite groups of Lie type with a restriction containing a linear character once." Journal of Algebra 309.2 (2007): 543-558.
\bibitem[En1972]{En1972} Enomoto, Hikoe. ``The characters of the finite symplectic group $Sp (4, q), q= 2^f$." Osaka J. Math 9.1 (1972): 75-94.
 \bibitem[Iw1965]{Iw1965} Iwahori, Nagayoshi, and Hideya Matsumoto. ``On some Bruhat decomposition and the structure of the Hecke rings of $ p $-adic Chevalley groups." Publications Mathématiques de l'IHÉS 25 (1965): 5-48.
\bibitem[J2022]{J2022} Johnson-Leung, Jennifer, Brooks Roberts, and Ralf Schmidt. Stable Klingen vectors and Paramodular Newforms. Springer, 2023.
 \bibitem[L2010]{L2010} Lust, Jaime. Verifying depth-zero supercuspidal L-packets for inner forms of $GSp(4)$. Diss. UC San Diego, 2010.
 \bibitem[Ro2006]{Ro2006} Roberts, Brooks, and Ralf Schmidt. ``A decomposition of the spaces $S_k (\Gamma_0 (N))$ in degree 2 and the construction of hypercuspidal modular forms." Proceedings of the 9th Autumn Workshop on Number Theory, Hakuba, Japan, 2006.
 \bibitem[Ro2007]{Ro2007} Roberts, Brooks, and Ralf Schmidt. Local newforms for $GSp (4)$. Vol. 1918. Springer Science and Business Media, 2007.
 \bibitem[Ros2016]{Ros2016} R\"{o}sner, Mirko. Parahoric restriction for {${\rm GSp}(4)$} and the inner cohomology of {S}iegel modular threefolds. Heidelberg University PhD Thesis, 2016. Available at\\ {\tt https://www.mathi.uni-heidelberg.de/fg-sga}
 \bibitem[Ros2018]{Ros2018} R\"{o}sner, Mirko. ``Parahoric restriction for $\GSp(4)$." Algebras and Representation Theory 21 (2018): 145-161.
 \bibitem[RSY2022]{RSY2022} Roy, Manami, Ralf Schmidt, and Shaoyun Yi. ``Dimension formulas for Siegel modular forms of level 4." Mathematika 69.3 (2023): 795-840.
 \bibitem[Sa1993]{Sa1993} Sally, Paul, and Marko Tadic. ``Induced representations and classifications for $GSp (2, F)$ and $Sp (2, F)$." M\'{e}m. Soc. Math. France (NS) 52 (1993): 75-133.
 \bibitem[Sh1982]{Sh1982} Shinoda, Ken-ichi. ``The characters of the finite conformal symplectic group, $CSp (4, q)$." Communications in Algebra 10.13 (1982): 1369-1419.
 \bibitem[Yi2021]{Yi2021} Yi, Shaoyun. ``Klingen $\mathfrak{p}^2$ vectors for GSp(4)." The Ramanujan Journal 54, no. 3 (2021): 511-554.
 
 
\end{thebibliography}
\end{document}